\newtheorem{theo}{Theorem}[section]
\newtheorem{lemma}[theo]{Lemma}
\newtheorem{cor}[theo]{Corollary}
\newtheorem{prop}[theo]{Proposition}
\newtheorem{defi}[theo]{Definition}
\newtheorem{example}[theo]{Example}
\newtheorem{question}[theo]{Question}
\def\e{{\epsilon}}
\def\a{{\alpha}}
\def\b{{\beta}}
\newcommand{\K}{{\mathbb{K}}}
\newcommand{\B}{{\mathbb{B}}}
\newcommand{\C}{{\mathbb{C}}}
\newcommand{\R}{{\mathbb{R}}}
\newcommand{\BS}{\mathbb{S}}
\newcommand{\BD}{\mathbb{D}}
\newcommand{\D}{\mathbb{D}}
\newcommand{\F}{\mathcal{F}}
\newcommand{\HH}{\mathcal{H}}
\begin{document}
\title[]{On deformations of isolated singularity functions}
\author{Aur\'elio Menegon}
\author{Miriam da Silva Pereira}

\thanks{The first author was supported by the {\it Conselho Nacional de Desenvolvimento Cient\'ifico e Tecnol\'ogico} (CNPq), Brazil.}
\address{Aur\'elio Menegon: Departamento de Matem\'atica - Universidade Federal da Para\'iba - Brazil.}
\email{aurelio.menegon@academico.ufpb.br}
\address{Miriam da Silva Pereira: Departamento de Matem\'atica - Universidade Federal da Para\'iba - Brazil.}
\email{msp@academico.ufpb.br}

\begin{abstract}
We study multi-parameters deformations of isolated singularity function-germs on either a subanalytic set or a complex analytic spaces. We prove that if such a deformation has no coalescing of singular points, then it has constant topological type. This extends some classical results due to L\^e \& Ramanujam (1976) and Parusi\'nski (1999), as well as a recent result due to Jesus-Almeida and the first author. It also provides a sufficient condition for a one-parameter family of complex isolated singularity surfaces in $\C^3$ to have constant topological type. On the other hand, for complex isolated singularity families defined on an isolated determinantal singularity, we prove that $\mu$-constancy implies constant topological type.
\end{abstract}
\maketitle

\section*{Introduction}  

The study of deformations of maps and spaces plays a central role in Singularity Theory and in Complex Geometry. For holomorphic isolated singularity function-germs on a complex affine space $\C^n$, the topological behavior of small perturbations is related to the Milnor number, which is an analytic invariant with a very clear topological and algebraic meaning. In fact, when $n\neq3$ there is a classical result due to L\^e \& Ramanujam \cite{LR} in 1976, which states that a family $f_s: (\C^n,0) \to (\C,0)$ is topologically trivial if and only if the Milnor number $\mu(f_s)$ is constant, for $s \in \C$ sufficiently small. 

The case $n=3$ is one of the main open problems in Singularity Theory, known as {\it the $\mu$-constant conjecture}. Here, topological triviality means that for each $s$ small enough there exists a homeomorphism of germs $h_s: (\C^n,0) \to (\C^n,0)$ that takes the germ of $V(f_s) := f_s^{-1}(0)$ at the origin onto the germ of $V(f_0):= f_0^{-1}(0)$ at the origin. 

In 1999, Parusi\'nski \cite{Pa} extended L\^e \& Ramanujam's result for $\K$-analytic isolated singularity families $f_s: (\K^n,0) \to (\K,0)$, with $\K=\C$ or $\R$, even when $n=3$, provided that the family depends linearly on the parameter $s \in \K$. This means that it has the form $f_s = f_0 + s \varphi$, for some $\K$-analytic isolated singularity function-germ $\varphi: (\K^n,0) \to (\K,0)$.

Recently, Jesus-Almeida and the first author \cite{MJ} studied $\K$-analytic families $f_s: (X,0) \to (\K,0)$ depending linearly on one parameter $s \in \K$, where $(X,0) \subset (\K^N,0)$ is the germ of a subanalytic set, when $\K=\R$, or a complex analytic space, when $\K=\C$. In particular, it was proved that if such a family has no coalescing of singular points, then it has constant topological type. Here, having no coalescing of singular points means that there exist real numbers $\delta>0$ and $\e>0$ such that $\Sigma(f_s) \cap \B_{\e} = \{0\}$ for every $s \in \D_\delta$, where $\Sigma(f_s)$ denotes the singular locus of $f_s$ (in the stratified sense, with respect to a given $(w)$-regular stratification) and $\B_\e$ denotes the closed ball around $0$ in $\K^N$. Also, having constant topological type means that for each $s \in (\K,0)$ there exists a homeomorphism of germs $h_s: \big( V(f_s),0 \big) \to \big( V(f_0),0 \big)$. Clearly, this is weaker than having topological triviality in the sense of L\^e \& Ramanujam.

The first goal of this paper is to study $\K$-analytic isolated singularity families $f_{s_1, \dots, s_k}: (X,0) \to (\K,0)$ depending linearly on $k$-many parameters $s_1, \dots, s_k \in \K$, with $k>1$. Then this will allow us to study one-parameter $\K$-analytic isolated singularity families $\mathtt f_s: (X,0) \to (\K,0)$ that depend polynomially on $s \in \K$, which means it has the form
$$\mathtt f_s = f_0 + s^{a_1} \varphi_1 + s^{a_2} \varphi_2 + \dots + s^{a_k} \varphi_k \, ,$$
with $0<a_1< \dots < a_k$. This will be done by setting 
$$f_{s_1, \dots, s_k} := f_0 + s_1 \varphi_1 + s_2 \varphi_2 + \dots + s_k \varphi_k \, ,$$
so that $\mathtt f_s = f_{s^{a_1}, \dots, s^{a_k}}$.

Our main results are the following:

In Section 1, we give an easy criterium to assure that a $\K$-analytic family $f_{s_1, \dots, s_k}: (X,0) \to (\K,0)$ has isolated singularity or even no coalescing of critical points. This is Proposition \ref{theo_is}. In Section 2, we prove:

\begin{theo} \label{theo_ctt}
Let $(X,0) \subset (\K^N,0)$ be the germ of either a subanalytic set, when $\K=\R$, or a complex analytic space, when $\K=\R$. Let $\mathcal S$ be a $(w)$-regular stratification of a representative $X$ of $(X,0)$. Let $(f_{s_1, \dots, s_k})$ be a family of $\K$-analytic isolated singularity function-germs on $(X,0)$ depending linearly on the parameters $s_1, \dots, s_k \in \K$. If $(f_{s_1, \dots, s_k})$ has no coalescing of singular points (with respect to $\mathcal S$) then it has constant topological type.
\end{theo}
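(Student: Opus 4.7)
The plan is to proceed by induction on the number $k$ of parameters, reducing the result to the single-parameter case of \cite{MJ} via a two-stage composition argument. The base case $k=1$ is exactly the theorem of Jesus-Almeida and the first author in \cite{MJ}, so nothing is required there. The feature of the hypothesis that makes the induction go through is that the family $f_{s_1,\dots,s_k} = f_0 + s_1\varphi_1 + \dots + s_k\varphi_k$ depends \emph{linearly in each parameter separately}, so that freezing some of the $s_i$ still yields a linear family in the remaining ones, to which the inductive hypothesis (or the base case) can be applied.

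For the inductive step, assume the statement for $(k-1)$-parameter families and let $\delta, \e > 0$ witness the no-coalescing property, i.e.\ $\Sigma(f_{s_1,\dots,s_k}) \cap \B_\e = \{0\}$ for every $(s_1,\dots,s_k) \in \D_\delta^{k}$. Given any point $(s_1^\circ, \dots, s_k^\circ) \in \D_\delta^{k}$, I construct the desired germ homeomorphism as the composition of two homeomorphisms that slide the parameters to zero in two stages. First, consider the one-parameter family $g_t := f_{t,s_2^\circ,\dots,s_k^\circ} = g_0 + t\varphi_1$, where $g_0 := f_{0,s_2^\circ,\dots,s_k^\circ}$; this is a $\K$-analytic linear one-parameter family of isolated singularity functions on $(X,0)$, and the no-coalescing property is inherited on $\D_\delta$ since $\Sigma(g_t) \cap \B_\e = \Sigma(f_{t,s_2^\circ,\dots,s_k^\circ}) \cap \B_\e = \{0\}$ for every $t \in \D_\delta$. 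The base case then yields a germ homeomorphism $h^{(1)}:(V(f_{s_1^\circ,\dots,s_k^\circ}),0) \to (V(f_{0,s_2^\circ,\dots,s_k^\circ}),0)$. Second, the $(k-1)$-parameter family $(s_2,\dots,s_k) \mapsto f_{0,s_2,\dots,s_k} = f_0 + s_2\varphi_2 + \dots + s_k\varphi_k$ is linear in its parameters, each of its members has isolated singularity at $0$, and no-coalescing on $\D_\delta^{k-1}$ is inherited from the global hypothesis. The inductive hypothesis then supplies a germ homeomorphism $h^{(k-1)}:(V(f_{0,s_2^\circ,\dots,s_k^\circ}),0) \to (V(f_0),0)$. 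Composing yields the desired $h := h^{(k-1)} \circ h^{(1)}$.

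The main point to verify carefully is that a single pair $(\delta, \e)$ witnessing no-coalescing for the full $k$-parameter family automatically witnesses it both for the one-parameter slice (obtained by fixing $(s_2^\circ,\dots,s_k^\circ)$) and for the codimension-one face $\{s_1 = 0\}$ of the parameter polydisk used in the inductive call; this is immediate from the inclusion of the corresponding parameter sets into $\D_\delta^{k}$. Once this bookkeeping is in place the rest is a clean composition of germ homeomorphisms, and no revisiting of the Thom--Mather or rugose vector field constructions underlying \cite{MJ} is required. Consequently I do not foresee any substantial obstacle beyond this inheritance verification, and expect the argument to be correspondingly short.
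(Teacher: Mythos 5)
Your induction on $k$ is a genuinely different decomposition from the paper's: the paper fibres the parameter space by radial lines through the origin, writing each parameter value as $s\cdot r$ with $r$ in the compact sphere $\BS^{\xi k-1}$ and applying the one-parameter machinery to $g^r_s := f_{sr_1,\dots,sr_k} = f_0 + s(r_1\varphi_1+\dots+r_k\varphi_k)$, which lands on $V(f_0)$ in a single step. Your route could in principle work, but as written it has a genuine gap, located exactly at the step you declare to be ``immediate'' and to require ``no revisiting'' of \cite{MJ}. The one-parameter result does not produce homeomorphisms on the whole disk where no coalescing holds: Proposition \ref{prop_gpr} produces them on $\D_{\delta}$ only when $\delta$ is a \emph{good parameter-radius}, which requires, besides condition (i) (no coalescing), condition (ii) (that $V(g_t)\setminus V(\varphi_1)$ meet the sphere $\BS_\e$ transversally) and condition (iii) (the $\Delta$-regularity $\HH_t\cap\Delta(\F)\cap U\subset\{0\}$ for the map $\F=(g_0,\varphi_1)$). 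In your first stage the base function is $g_0=f_{0,s_2^\circ,\dots,s_k^\circ}$, which varies with the frozen parameters; hence the admissible $\e$, the neighborhood $U$, and the resulting radius $\delta_1=\delta_1(s_2^\circ,\dots,s_k^\circ)$ up to which the base case applies all depend on $(s_2^\circ,\dots,s_k^\circ)$. Only condition (i) is inherited with the original pair $(\delta,\e)$; conditions (ii) and (iii) are not, and \cite{MJ} only guarantees their validity for \emph{some} $\e$ and $\delta$ depending on the slice family. Without a uniform positive lower bound on $\delta_1$ over the frozen parameters, the set of $(s_1,\dots,s_k)$ on which your composition $h^{(k-1)}\circ h^{(1)}$ is defined need not contain any neighborhood of $0$ in $\K^k$, so constant topological type in the germ sense is not established.

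The gap is repairable, but only by doing the work you set aside: a continuity-plus-compactness argument over the compact set of frozen parameters, parallel to what the paper does in Lemma \ref{lemma_delta} for $\Delta$-regularity over $\BS^{\xi k-1}$ and in the accompanying remark for sphere transversality, so as to produce one pair $(\e,\delta')$ that is a good parameter-radius for all the one-parameter slices simultaneously. The paper's radial decomposition is better adapted to this uniformity issue: the set of directions is compact, every radial family has the same base point $f_0$ (so no composition of two homeomorphisms, and no comparison of Milnor balls of different radii, is needed), and the required uniform statement is exactly Lemma \ref{lemma_delta}. Your coordinate-wise slicing can be completed with an analogous uniformity lemma over the closed polydisk of frozen parameters, but as it stands the proof is incomplete.
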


As an important consequence, we have:

\begin{theo} \label{cor_ctt}
Let $\mathtt f_s: (X,0) \to (\K,0)$ be a one-parameter $\K$-analytic isolated singularity family depending polynomially on $s \in \K$. Let $f_{s_1, \dots, s_k}: (X,0) \to (\K,0)$ be the multi-parameter family associated to $(\mathtt f_s)$, defined as above. If $(f_{s_1, \dots, s_k})$ has no coalescing of singular points, then $(\mathtt f_s)$ has constant topological type.
\end{theo}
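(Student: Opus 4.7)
The plan is to deduce Theorem \ref{cor_ctt} as a direct specialization of Theorem \ref{theo_ctt}. By construction one has, for every sufficiently small $s \in \K$, the identity
$$\mathtt f_s \;=\; f_{s^{a_1}, \dots, s^{a_k}},$$
and hence $V(\mathtt f_s) = V(f_{s^{a_1}, \dots, s^{a_k}})$ as germs at $0$. So the natural idea is: apply Theorem \ref{theo_ctt} to the multi-parameter family $f_{s_1,\dots,s_k}$ to obtain, for each $(s_1,\dots,s_k)$ in a polydisk $\D_\delta^k$ around the origin, a germ-homeomorphism
$$h_{(s_1,\dots,s_k)} : \bigl( V(f_{s_1,\dots,s_k}),0 \bigr) \longrightarrow \bigl( V(f_0),0 \bigr),$$
and then restrict to the analytic curve $s \mapsto (s^{a_1}, \dots, s^{a_k})$.

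More precisely, I would first observe that the hypothesis of Theorem \ref{cor_ctt} is exactly the hypothesis of Theorem \ref{theo_ctt} applied to $(f_{s_1,\dots,s_k})$, so the conclusion of Theorem \ref{theo_ctt} is available. Next, since each $a_i > 0$, we have $s^{a_i} \to 0$ as $s \to 0$, and therefore there exists $\delta' > 0$ such that $(s^{a_1}, \dots, s^{a_k}) \in \D_\delta^k$ for all $s \in \D_{\delta'}$. On this range I would define
$$H_s \;:=\; h_{(s^{a_1}, \dots, s^{a_k})} : \bigl( V(\mathtt f_s),0 \bigr) \longrightarrow \bigl( V(f_0),0 \bigr),$$
which is the required germ-homeomorphism witnessing constant topological type of $(\mathtt f_s)$.

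Since the notion of constant topological type used in the paper only demands the existence of a germ-homeomorphism for each parameter value (no continuity or trivialization in $s$ is required), no further compatibility check across parameters is needed, and the argument is essentially complete. The only point worth recording explicitly is that the no-coalescing hypothesis on $(f_{s_1,\dots,s_k})$ over the polydisk $\D_\delta^k$ automatically transfers to the polynomial family $(\mathtt f_s)$ along the curve of specialization, because $\Sigma(\mathtt f_s) = \Sigma(f_{s^{a_1},\dots,s^{a_k}})$ and the curve lies inside the polydisk for small $s$. There is no serious obstacle in this step: the heavy lifting is contained in Theorem \ref{theo_ctt}, and the corollary follows by a one-line substitution once that theorem is in hand.
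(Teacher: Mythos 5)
Your proposal is correct and matches the paper's own (very brief) argument: the authors likewise deduce Theorem \ref{cor_ctt} from Theorem \ref{theo_ctt} by specializing the multi-parameter homeomorphisms along the curve $s \mapsto (s^{a_1},\dots,s^{a_k})$, which lands in the parameter neighborhood for small $s$ since each $a_i>0$. Your added remark that no compatibility in $s$ is needed (because constant topological type only asks for a germ-homeomorphism for each fixed parameter) is exactly the point that makes the substitution immediate.
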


Corollary \ref{cor_ctt}  relates to the $\mu$-constant conjecture, as it provides a sufficient condition for a one-parameter family of complex isolated singularity surfaces in $\C^3$ to have constant topological type. Precisely, we have:

\begin{cor} \label{cor_mcc}
Let $\mathtt f_s: (\C^3,0) \to (\C,0)$ be an analytic isolated singularity family depending polynomially on $s \in \C$. Let $f_{s_1, \dots, s_k}: (\C^3,0) \to (\C,0)$ be the multi-parameter family associated to $(\mathtt f_s)$, as above. If $(f_{s_1, \dots, s_k})$ is $\mu$-constant, then $(\mathtt f_s)$ has constant topological type.
\end{cor}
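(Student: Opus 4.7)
The plan is to derive Corollary \ref{cor_mcc} directly from Corollary \ref{cor_ctt} by showing that, in the complex analytic setting on $\C^3$, $\mu$-constancy of the associated multi-parameter family $(f_{s_1,\dots,s_k})$ forces the no-coalescing condition; once this reduction is made, the rest is automatic.

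First I note that, since $X=\C^3$ is smooth, the trivial stratification with a single open stratum is $(w)$-regular, so the stratified singular locus $\Sigma(f_{s_1,\dots,s_k})$ coincides with the usual critical set $\{df_{s_1,\dots,s_k}=0\}$, and the isolated-singularity hypothesis means this set equals $\{0\}$ near the origin for each parameter value. Setting $\mu_0:=\mu(f_0,0)$, the main step invokes the classical principle of conservation of Milnor numbers for holomorphic deformations of isolated hypersurface singularities: there exist $\e>0$ and a polydisc $\D^k_\delta$ around $0\in\C^k$ such that, for every $(s_1,\dots,s_k)\in\D^k_\delta$,
\[
\sum_{p\,\in\,\Sigma(f_{s_1,\dots,s_k})\cap\B_\e} \mu\big(f_{s_1,\dots,s_k},p\big) \;=\; \mu_0\,.
\]
This identity reflects the fact that the local degree of the gradient map $\nabla f_{s_1,\dots,s_k}:\B_\e\to\C^3$ at $0$ equals $\mu_0$ and is stable under perturbation, or equivalently that the total length of the Jacobian algebra is preserved. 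Since the hypothesis $\mu(f_{s_1,\dots,s_k},0)=\mu_0$ already saturates the left-hand side, and every term at a critical point is a strictly positive integer, no other critical point can exist in $\B_\e$. Hence $\Sigma(f_{s_1,\dots,s_k})\cap\B_\e=\{0\}$ for every parameter in $\D^k_\delta$, which is precisely the no-coalescing condition.

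With no coalescing established, Corollary \ref{cor_ctt} applies verbatim and yields that $(\mathtt f_s)$ has constant topological type. I do not foresee any serious obstacle: conservation of Milnor numbers is a classical result and works equally well for arbitrary-dimensional parameter spaces, while the choice of stratification is trivial because the ambient space is smooth. The only mild point of care is selecting $\e$ and $\delta$ compatibly so that a single Milnor ball $\B_\e$ isolates the origin uniformly over the whole polydisc $\D^k_\delta$, which is the standard openness argument underlying the principle of conservation of numbers.
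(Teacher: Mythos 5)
Your proof is correct, and its overall skeleton matches the paper's: both reduce Corollary \ref{cor_mcc} to Theorem \ref{cor_ctt} by showing that $\mu$-constancy of the multi-parameter family forces the no-coalescing condition. Where you differ is in how that implication is established. The paper proves a lemma asserting that, on $(\C^n,0)$, no coalescing is \emph{equivalent} to $\mu$-constancy, and it does so by slicing the parameter space into complex lines through the origin: for each direction $r\in\BS^{2k-1}$ it applies the classical one-parameter equivalence (citing \cite{Gr}) to the ray family $g^r_s$, and then uses compactness of $\BS^{2k-1}$ to obtain uniform radii. You instead invoke the principle of conservation of Milnor numbers directly for the $k$-parameter deformation: the sum of the local Milnor numbers of $f_{s_1,\dots,s_k}$ over the critical points in $\B_\e$ equals $\mu(f_0,0)$, so if the term at the origin already saturates the total, positivity of each remaining term rules out any other critical point. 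This is a legitimate and arguably cleaner route: it yields both directions of the equivalence at once, avoids the ray decomposition and the compactness step, and is precisely the style of argument the paper itself deploys in Section 4 for families on an ICIS, where the Jacobian algebra is Cohen--Macaulay. The only points worth making explicit in your write-up are that the critical locus of $f_{s_1,\dots,s_k}$ in $\overline\B_\e$ is finite for small parameters (so the local degrees in your sum are defined), which follows from the analyticity of the relative critical locus together with the choice of $\e$, and the uniform choice of $\e$ and $\delta$ over the polydisc, which you already flag.
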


The case when the family $\mathtt f_s: (\C^3,0) \to (\C,0)$ depends analytically on $s \in \C$ is analogous, by a well-known result of Samuel \cite{Sa} (see also \cite{LR}, Theorem 1.7).

In Section 3, we restrict our attention to the particular case of complex analytic isolated singularity families $f_{s_1, \dots, s_k}: (X,0) \to (\C,0)$ depending linearly on the parameter $s_1, \dots, s_k \in \C$, with $k \geq 1$, where $(X,0) \subset (\C^N,0)$ is the germ of a complex isolated determinantal singularity (IDS). In this case, there is a well-defined Milnor number $\mu(f_s)$ in the sense of \cite{BOT}.

We prove:

\begin{theo} \label{theo_3}
Let $(X,0) \subset (\C^N,0)$ be the germ of a complex isolated determinantal singularity. Let $f_{s_1, \dots, s_k}: (X,0) \to (\C,0)$ be a family of isolated singularity function-germs depending linearly on $s_1, \dots, s_k \in \C$, with $k \geq 1$. If $(f_{s_1, \dots, s_k})$ is $\mu$-constant, then it has constant topological type.
\end{theo}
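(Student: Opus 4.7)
The plan is to reduce Theorem \ref{theo_3} to Theorem \ref{theo_ctt}: once we know the $\mu$-constant family $(f_{s_1,\dots,s_k})$ has no coalescing of stratified singular points, the constancy of topological type is immediate. Hence the real content of the proof is to show that $\mu$-constancy forces no coalescence.

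The key tool is a conservation law for the IDS Milnor number of \cite{BOT}. First I fix a Milnor radius $\e>0$ for the pair $(f_0,X)$ at the origin, and choose $\delta>0$ so small that for every $(s_1,\dots,s_k) \in \D_\delta^k$ the function $f_{s_1,\dots,s_k}$ has finite stratified critical locus inside $X \cap \B_\e$ (this is possible by Proposition \ref{theo_is}, together with openness of the Milnor radius). A standard Thom isotopy argument along the complement of the discriminant of the total deformation then yields the additivity formula
\begin{equation*}
  \mu(f_0,0) \;=\; \sum_{p \,\in\, \Sigma(f_{s_1,\dots,s_k}) \,\cap\, \B_\e} \mu(f_{s_1,\dots,s_k},p),
\end{equation*}
where $\mu(f,p)$ denotes the IDS Milnor number of \cite{BOT} at the point $p$. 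This is the direct analog, on an IDS, of the classical conservation of vanishing cycles under deformation in the smooth ambient case.

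By the $\mu$-constancy hypothesis one has $\mu(f_{s_1,\dots,s_k},0) = \mu(f_0,0)$, so the additivity formula forces $\mu(f_{s_1,\dots,s_k},p) = 0$ at every stratified critical point $p \in \B_\e \setminus \{0\}$. However, by \cite{BOT} the Milnor fibre of an isolated-singularity function at a stratified critical point on an IDS is a non-trivial bouquet of spheres, so every stratified critical point carries strictly positive Milnor number. This contradiction forces $\Sigma(f_{s_1,\dots,s_k}) \cap \B_\e \subseteq \{0\}$ for every $(s_1,\dots,s_k) \in \D_\delta^k$, which is exactly the no-coalescence hypothesis of Theorem \ref{theo_ctt}. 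The conclusion then follows from that theorem.

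The step I expect to require the most care is the additivity formula in this multi-parameter IDS setting: verifying that the Milnor fibre of $f_0$ at the origin is recovered, up to the relevant homotopy type, from the disjoint union of the local Milnor fibres of $f_{s_1,\dots,s_k}$ at its critical points inside $\B_\e$. In the smooth ambient case this is the classical conservation of vanishing cycles, but on a non-smooth ambient space with a multi-parameter deformation one needs to combine Thom's isotopy lemma on the complement of the total discriminant with the stability properties of the IDS Milnor fibre from \cite{BOT}. Everything else in the proof is bookkeeping once this additivity is in place.
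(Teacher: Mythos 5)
Your overall strategy coincides with the paper's: both arguments reduce Theorem \ref{theo_3} to Theorem \ref{theo_ctt} by showing that $\mu$-constancy forces no coalescing of singular points, and both extract this from the conservation law $\mu(f_0,0)=\sum_{p\in\Sigma(f_{s_1,\dots,s_k})\cap\B_\e}\mu(f_{s_1,\dots,s_k},p)$ together with strict positivity of the Milnor number at any extra critical point. The difference --- and the soft spot in your write-up --- is how the conservation law is justified. The paper (Lemma \ref{lemma}) gets it almost for free from the definition of the invariant of \cite{BOT}: $\mu(f_s,p)$ is \emph{defined} as the number of Morse points of a morsification taken on a smoothing $X'$ of $X$, and since Morse points are stable under small perturbations, the total count in $X'\cap\B_\e$ is locally constant in $s$. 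Your proposed route via Thom isotopy and ``conservation of vanishing cycles'' of Milnor fibres does not transfer cleanly to an IDS: the local fibre $X\cap f_0^{-1}(t)\cap\B_\e$ taken inside $X$ itself is in general not a bouquet of spheres and does not compute $\mu(f_0,0)$, because the invariant of \cite{BOT} lives on the smoothing $X'$ rather than on $X$. Any additivity argument phrased in terms of the Milnor fibres of $f_0$ and $f_{s_1,\dots,s_k}$ on $X$ would have to be re-routed through $X'$, at which point one is essentially performing the morsification argument anyway. Relatedly, your positivity claim (``the Milnor fibre at a stratified critical point on an IDS is a non-trivial bouquet of spheres'') is only valid --- and only needed --- at the points $p\neq 0$, where $X$ is smooth and classical Milnor theory gives $\mu(f_{s_1,\dots,s_k},p)\geq 1$; at the origin of the IDS it can fail, but there the term is controlled directly by the $\mu$-constancy hypothesis. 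Once the conservation law is established as in Lemma \ref{lemma} and Proposition \ref{prop_pcn}, the remainder of your reduction to Theorem \ref{theo_ctt} is correct and matches the paper.
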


As a consequence, like before, we have:

\begin{cor} \label{cor_mcc2}
Let $(X,0) \subset (\C^N,0)$ be the germ of a complex isolated determinantal singularity. Let $\mathtt f_s: (X,0) \to (\C,0)$ be an analytic isolated singularity family depending polynomially on $s \in \C$, and let $f_{s_1, \dots, s_k}: (X,0) \to (\C,0)$ be the multi-parameter family associated to $(\mathtt f_s)$, as above. If $(f_{s_1, \dots, s_k})$ is $\mu$-constant, then $(\mathtt f_s)$ has constant topological type.
\end{cor}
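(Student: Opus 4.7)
The plan is to deduce Corollary \ref{cor_mcc2} as a direct specialization of Theorem \ref{theo_3}, in complete parallel with the way Corollary \ref{cor_ctt} follows from Theorem \ref{theo_ctt}. The key device is the tautological identity
$$\mathtt f_s = f_{s^{a_1}, \dots, s^{a_k}}$$
built into the construction of the associated multi-parameter family, together with the continuity of the polynomial map $\psi: \C \to \C^k$ defined by $\psi(s) := (s^{a_1}, \dots, s^{a_k})$, which sends $0$ to $0$.

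First I would invoke Theorem \ref{theo_3} applied to $(f_{s_1, \dots, s_k})$: the ambient germ $(X,0)$ is an IDS, the family is a family of isolated singularity function-germs depending linearly on $(s_1, \dots, s_k) \in \C^k$ by construction, and it is $\mu$-constant by assumption. The theorem then supplies a polydisc $\Delta \subset \C^k$ around the origin and, for each $(s_1, \dots, s_k) \in \Delta$, a germ homeomorphism
$$h_{(s_1, \dots, s_k)} : \big( V(f_{s_1, \dots, s_k}), 0 \big) \to \big( V(f_0), 0 \big).$$

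Next, by continuity of $\psi$, I would choose $\delta > 0$ with $\psi(s) \in \Delta$ for every $|s| < \delta$, and set $\mathtt h_s := h_{\psi(s)}$. Because the tautological identity gives $V(\mathtt f_s) = V(f_{\psi(s)})$ as germs at the origin, this yields a germ homeomorphism $\mathtt h_s : (V(\mathtt f_s),0) \to (V(f_0),0)$ for each such $s$, which is precisely the statement that $(\mathtt f_s)$ has constant topological type.

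The argument is essentially formal and no serious obstacle appears beyond verifying the compatibility of hypotheses with Theorem \ref{theo_3}; this is immediate from the statement of the corollary, since the IDS assumption on $(X,0)$, the linear dependence on parameters of $(f_{s_1,\dots,s_k})$, the isolated singularity condition, and the $\mu$-constancy are all built into the assumptions. The mildest subtlety is that the Milnor number $\mu$ of a function on an IDS must be taken in the sense of \cite{BOT}, but this is the same convention already used in Theorem \ref{theo_3}, so no further verification is required.
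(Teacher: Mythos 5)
Your proposal is correct and follows essentially the same route as the paper: the corollary is obtained by applying Theorem \ref{theo_3} to the associated multi-parameter family and then specializing along the substitution $\mathtt f_s = f_{s^{a_1}, \dots, s^{a_k}}$, exactly as Corollary \ref{cor_ctt} is deduced from Theorem \ref{theo_ctt}. The paper leaves this step implicit (``as a consequence, like before''), and your write-up simply makes the formal specialization explicit.
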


\section{Isolated singularity families and coalescing of singular points}  

Let $(X,0) \subset (\K^N,0)$ be as above, and let $\mathcal S = (\mathcal S_\a)_{\a \in \Lambda}$ be a $(w)$-regular stratification of a representative $X$ of $(X,0)$, in the sense of \cite{Ve}. Without losing generality, we can suppose that $\{0\}$ is a stratum of $\mathcal S$.

Let $\tilde f_0: (\K^N,0) \to (\K,0)$ be an analytic function-germ and let $f_0: (X,0) \to (\K,0)$ be the restriction of $\tilde f_0$ to $(X,0)$. Clearly, the singular locus $\Sigma(f_0)$ of $f_0$ is formed by $X \cap \Sigma(\tilde f_0)$, the intersection of $X$ with the critical set of $\tilde f_0$, together with the points $x \in X \setminus \Sigma(\tilde f_0)$ such that the smooth manifold $(\tilde f_0)^{-1}(\tilde f_0(x))$ intersects $\mathcal S_{\a(x)}$ not transversally at $x$ in $\K^N$, where $\mathcal S_{\a(x)}$ denotes the stratum of the stratification $\mathcal S$ that contains the point $x$. 

Notice that since we chose a stratification $\mathcal S$ such that $\{0\}$ is a stratum, the origin $0 \in X \subset \K^N$ is a singular point of $f_0$, even if $\tilde f_0$ is a submersion at $0 \in \K^N$.

We say that $f_0: (X,0) \to (\K,0)$ has an {\it isolated singularity} if there exists a positive real number $\e>0$ such that $\Sigma(f_0) \cap \B_\e = \{0\}$. This means that the restriction of $f_0$ to each stratum $\mathcal S_\a \neq \{0\}$ is a submersion.

Now, if $\tilde f_s: (\K^N,0) \to (\K,0)$ is a deformation of $\tilde f_0$ depending continuously on the parameter $s \in (\K^k,0)$, it induces a deformation $f_s: (X,0) \to (\K,0)$ of $f_0$ that obviously depends continuously on $s$. In the case when $f_0$ has an isolated singularity, the next lemma relates the critical set of $f_s$ with the critical set of $\tilde f_s$ inside some small neighborhood of $0 \in \K^N$, whenever $\|s\|$ is small enough.

We have:

\begin{lemma} \label{prop}
Suppose that $f_0: (X,0) \to (\K,0)$ has an isolated singularity, and let $\e>0$ be such that $\Sigma(f_0) \cap \overline\B_\e = \{0\}$. Then there exists $\delta>0$ sufficiently small such that for every $s \in \D_\delta$ one has that 
$$\Sigma(f_s) \cap \overline\B_\e = \left( \Sigma(\tilde f_s) \cap X \cap \overline \B_\e \right) \cup \{0\} \, .$$
\end{lemma}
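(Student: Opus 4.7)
I would verify the two inclusions separately. The inclusion $\supseteq$ is immediate from the definition of stratified critical set: for any $x \in \Sigma(\tilde f_s) \cap X$ one has $d\tilde f_s(x) = 0$, so in particular $d\tilde f_s(x)|_{T_x \mathcal S_{\a(x)}} = 0$ and $x \in \Sigma(f_s)$; and $0 \in \Sigma(f_s)$ because $\{0\}$ is a zero-dimensional stratum of $\mathcal S$.

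For $\subseteq$ I would argue by contradiction, combining a compactness argument with Verdier's $(w)$-regularity. Suppose no such $\delta$ works; then there exist sequences $s_n \to 0$ in $\K^k$ and
\[
x_n \in \bigl(\Sigma(f_{s_n}) \setminus \Sigma(\tilde f_{s_n})\bigr) \cap \bigl(\overline\B_\e \setminus \{0\}\bigr).
\]
From the description of $\Sigma(f_{s_n})$, each such $x_n$ must be a \emph{type-2} critical point: $x_n$ lies in some positive-dimensional stratum $\mathcal S_{\a_n}$ with $d\tilde f_{s_n}(x_n) \neq 0$ but $d\tilde f_{s_n}(x_n)|_{T_{x_n}\mathcal S_{\a_n}} = 0$. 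Since only finitely many strata meet the compact set $\overline\B_\e$, after extracting a subsequence I may assume $\a_n = \a$ is constant and $x_n \to x_* \in \overline\B_\e$; let $\mathcal S_\b$ denote the stratum of $x_*$, so $\mathcal S_\b \subseteq \overline{\mathcal S_\a}$. Using compactness of the appropriate Grassmannian a further extraction gives $T_{x_n}\mathcal S_\a \to T$. Verdier's $(w)$-regularity for the pair $(\mathcal S_\a, \mathcal S_\b)$ then yields $T_{x_*}\mathcal S_\b \subseteq T$, and the joint continuity of $(s,y) \mapsto d\tilde f_s(y)$ combined with the tangent convergence forces $d\tilde f_0(x_*)|_T = 0$, whence $d\tilde f_0(x_*)|_{T_{x_*}\mathcal S_\b} = 0$. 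Therefore $x_* \in \Sigma(f_0) \cap \overline\B_\e = \{0\}$, so $x_* = 0$.

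The main obstacle is then the residual case $x_* = 0$: the compactness argument above rules out accumulation of type-2 critical points away from the origin, but does not by itself exclude the possibility of such $x_n \neq 0$ clustering at $0$ as $s_n \to 0$. To close this gap I expect to invoke the (sub)analyticity of the set $\{(s,x) \in \D_\delta \times X : x \in \Sigma(f_s) \setminus \Sigma(\tilde f_s)\}$ together with the curve selection lemma, producing a real analytic arc $r \mapsto (s(r), x(r))$ of type-2 critical points converging to $(0,0)$; repeating the limit-tangent analysis along this arc and comparing with the Verdier bound $\delta(T_{x_*}\mathcal S_\b, T_{x(r)}\mathcal S_\a) = O(\|x(r)\|)$ should produce the desired contradiction.
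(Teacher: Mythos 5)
Your inclusion $\supseteq$ and your compactness argument away from the origin are correct, and together they are essentially a sequential reformulation of the paper's proof: the paper covers $X \cap \overline\B_\e$ by neighborhoods $V_x$ on which transversality of the fibers of $\tilde f_s$ to the strata persists for small $s$, which is the contrapositive of your claim that no sequence of type-2 stratified critical points can accumulate at a point $x_* \neq 0$ (for that part Whitney's condition $(a)$, a consequence of $(w)$, already suffices). So up to and including the conclusion $x_*=0$, your argument is fine and matches the paper in substance.

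The genuine gap is the residual case $x_*=0$, which you correctly isolate but do not close; and the repair you sketch cannot work. Along an arc $\bigl(s(r),x(r)\bigr)\to(0,0)$ of type-2 critical points the relevant stratum pair is $(\mathcal S_\a,\{0\})$, and the Verdier inequality there bounds the distance from $T_0\{0\}=\{0\}$ to $T_{x(r)}\mathcal S_\a$, which is identically zero: it carries no information, so no contradiction can be extracted from it. Worse, the phenomenon you need to exclude actually occurs in this generality: take $X=\{y^2=x^3\}\subset\K^2$ with its canonical stratification $\{0\}$, $X\setminus\{0\}$, and $\tilde f_s(x,y)=y+sx$. Then $\Sigma(f_0)=\{0\}$ and $\Sigma(\tilde f_s)=\emptyset$ for all $s$, yet on $X\setminus\{0\}$ (parametrized by $t\mapsto(t^2,t^3)$) one has $f_s=t^3+st^2$, with a stratified critical point at $t=-2s/3$; this gives points of $\Sigma(f_s)\cap\overline\B_\e$ lying outside $\bigl(\Sigma(\tilde f_s)\cap X\cap\overline\B_\e\bigr)\cup\{0\}$ for every $s\neq0$, accumulating at the origin. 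Hence the missing step is not a technicality: it is exactly where the content of the lemma is concentrated, and it cannot be supplied by any limiting-tangent or curve-selection argument of the type you propose without additional hypotheses controlling the relative polar set of $\tilde f_s$ near $0$. For comparison, the paper's own proof disposes of this case by including $x=0$ in the covering and asserting the existence of a neighborhood $V_0$ of the origin containing no new stratified critical points for small $s$ --- precisely the assertion you found yourself unable to justify, and which the example above shows requires more than the stated hypotheses.
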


\begin{proof}
Clearly, one has that $\Sigma(\tilde f_s) \cap X \subset \Sigma(f_s)$, so one inclusion is obvious. On the other hand, for every $x \in X \cap \overline\B_\e$ fixed, one has that either $x$ is the origin or $(\tilde f_0)^{-1}(\tilde f_0(x))$ is smooth at $x$ and it intersects $\mathcal S_{\a(x)}$ transversally at $x$ in $\K^N$. Since transversality is a stable property, there exists $\delta(x)>0$ sufficiently small such that for every $s \in \D_{\delta(x)}$ one has that either $x \in \Sigma(\tilde f_s) \cup \{0\}$ or $(\tilde f_s)^{-1}(\tilde f_s(x))$ is smooth at $x$ and it intersects $\mathcal S_{\a(x)}$ transversally at $x$ in $\K^N$. Moreover, there exists a small neighborhood $V_x$ of $x$ in $X \cap \overline\B_\e$ such that for every $x' \in V_x$ and for every $s \in \D_{\delta(x)}$ one has that either $x' \in \Sigma(\tilde f_s) \cup \{0\}$ or $(\tilde f_s)^{-1}(\tilde f_s(x'))$ is smooth at $x'$ and it intersects $\mathcal S_{\a(x)}$ transversally at $x'$ in $\K^N$. Therefore, since $X \cap \overline\B_\e$ is compact, there exists $\delta>0$ such that for every $x \in X \cap \overline\B_\e$ and for every $s \in \D_\delta$ one has that either $x \in \Sigma(\tilde f_s) \cup \{0\}$ or $x \notin \Sigma(f_s)$. Hence $\Sigma(f_s) \cap \overline\B_\e \subset \left( \Sigma(\tilde f_s) \cap X \cap \overline \B_\e \right) \cup \{0\}$.
\end{proof}

We say that a family $(f_s)$ as above is an {\it isolated singularity family} if for each $s \in \K$ with $\|s\|$ sufficiently small, the function-germ $f_s: (X,0) \to (\K,0)$ has isolated singularity. That is, if there exists $\delta>0$ such that for every $s \in \D_\delta$ there is $\e(s)>0$ such that $\Sigma(f_s) \cap \B_{\e(s)} = \{0\}$.

An isolated singularity family $(f_s)$ as above is said to have {\it no coalescing of singular points} if there exist real numbers $\delta>0$ and $\e>0$ such that $\Sigma(f_s) \cap \B_{\e} = \{0\}$ for every $s \in \D_\delta$.

As an immediate consequence of Lemma \ref{prop}, we have:

\begin{prop} \label{theo_is}
Suppose that $f_0: (X,0) \to (\K,0)$ has isolated singularity. Then:
\begin{itemize}
\item[$(i)$] If $(\tilde f_s)$ is an isolated singularity family, then $(f_s)$ is also an isolated singularity family.
\item[$(ii)$] If $(\tilde f_s)$ has no coalescing of critical points, then $(f_s)$ has no coalescing of singular points.
\end{itemize}
\end{prop}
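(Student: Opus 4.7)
The plan is to deduce both parts directly from Lemma \ref{prop}, by combining the conclusion of that lemma with the hypothesis on the ambient family $(\tilde f_s)$ and shrinking balls appropriately. Since $f_0$ has isolated singularity, pick $\e>0$ with $\Sigma(f_0) \cap \overline\B_\e = \{0\}$ and let $\delta>0$ be the parameter produced by Lemma \ref{prop} for this $\e$, so that
$$\Sigma(f_s) \cap \overline\B_\e = \left( \Sigma(\tilde f_s) \cap X \cap \overline\B_\e \right) \cup \{0\}$$
for every $s \in \D_\delta$. This identity is the only input we need; both parts then become a matter of intersecting with a smaller ball.

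For part $(i)$, fix $s \in \D_\delta$. By hypothesis, there exists $\e'(s)>0$ with $\Sigma(\tilde f_s) \cap \B_{\e'(s)} = \{0\}$. Set $\e(s) := \min\{\e,\e'(s)\}$. Intersecting the displayed equality with $\B_{\e(s)}$ gives
$$\Sigma(f_s) \cap \B_{\e(s)} \subset \left( \Sigma(\tilde f_s) \cap \B_{\e(s)} \right) \cup \{0\} = \{0\},$$
which is precisely the isolated singularity condition for $f_s$, so $(f_s)$ is an isolated singularity family.

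For part $(ii)$, the hypothesis gives uniform constants $\delta'>0$ and $\e'>0$ with $\Sigma(\tilde f_s) \cap \B_{\e'} = \{0\}$ for every $s \in \D_{\delta'}$. Now one has to be slightly careful: replace $\e$ by $\e_0 := \min\{\e,\e'\}$ and reapply Lemma \ref{prop} to this smaller radius to obtain a (possibly smaller) uniform parameter $\delta_0>0$ such that the displayed equality holds with $\e_0$ in place of $\e$ for every $s \in \D_{\delta_0}$. Setting $\delta'' := \min\{\delta_0,\delta'\}$, for every $s \in \D_{\delta''}$ we get
$$\Sigma(f_s) \cap \B_{\e_0} \subset \left( \Sigma(\tilde f_s) \cap \B_{\e_0} \right) \cup \{0\} \subset \{0\},$$
which is the no-coalescing condition for $(f_s)$.

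There is essentially no obstacle here, as all the substantive work has been done in Lemma \ref{prop}; the only mild subtlety is in part $(ii)$, where one must remember to re-invoke the lemma after shrinking the spatial radius so that the parameter disk $\D_{\delta''}$ remains independent of $s$ — in other words, to preserve uniformity in $s$ one cannot just reuse the $\delta$ from part $(i)$ blindly, since that $\delta$ was tied to the original $\e$.
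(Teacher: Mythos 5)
Your proof is correct and follows exactly the route the paper intends: the paper states Proposition \ref{theo_is} as an ``immediate consequence'' of Lemma \ref{prop} without writing out the details, and your argument simply makes explicit the intersection-with-smaller-balls bookkeeping (including the correct observation that in part $(ii)$ one must re-invoke the lemma at the shrunken radius $\e_0$ to keep the parameter disk uniform in $s$).
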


\begin{example} \label{example_1}
Consider $X=\{x^2-y^2=0\} \subset \K^3$ with the stratification given by $\mathcal{S}_0= X \setminus \{x=y=0\}$ and $\mathcal{S}_1= \{x=y=0\}$. Consider the family of function-germs $f_s: (X,0) \to (\K,0)$ given by 
$$f_s(x,y,z)=x^3+y^4+z^2+sx^4 + s^2y^5 \, .$$
Since $f_0$ has an isolated singularity and since the family $(\tilde f_s)$ on $\K^3$ is an isolated singularity family with no coalescing of critical points, it follows from Theorem \ref{theo_is} that $(f_s)$ is an isolated singularity family with no coalescing of singular points.
\end{example}

\section{One-parameter families of isolated singularities}  

Let $(X,0) \subset (\K^N,0)$ and $\mathcal S = (\mathcal S_\a)_{\a \in \Lambda}$ be as above. Let $g_s: (X,0) \to (\K,0)$ be an isolated singularity family of $\K$-analytic function-germs depending linearly on the parameter $s \in \K$. This means that each $g_s$ has the form
$$g_s(z) = g_0(z) + s \varphi(z)  \, ,$$
where $g_0: (X,0) \to (\K,0)$ and $\varphi: (X,0) \to (\K,0)$ are $\K$-analytic isolated singularity function-germs.

Define the map-germ
$$
\begin{array}{cccc}
\F: & (X,0) & \to & (\K^2,0) \\
     & z & \mapsto & \big( g_0(z), \varphi(z) \big) \\
\end{array}
\, ,$$
and for each $s \in \K$ define the line $\HH_{s}$ in $\K^2$ given by
$$\HH_{s} := \{(y_0,y_1) \in \K^2; \ y_0 + s y_1 =0\}\, .$$
Notice that
$$(\F)^{-1}(\HH_{s}) = V(g_{s}) \, .$$

Following \cite{MJ} (Definition 2.1), we say that the family $(g_{s})$ is {\it $\Delta$-regular} (with respect to the stratification $\mathcal S$) if the line $\HH_0$ is not a limit of secant lines of the discriminant set $\Delta(\F)$ of $\F$ at $0 \in \K^2$. This means that there exist a neighborhood $U$ of $0$ in $\K^2$ and a real number $\delta>0$ such that 
$$\HH_s \cap \Delta(\F) \cap U \subset \{0\}$$
whenever $\|s\|<\delta$. 


Now we give:

\begin{defi}
Let $(g_s)$ be an isolated singularity family as above, and let $\e>0$ be a small real number. We say that a real number $\delta>0$ is a good parameter-radius for $(g_s)$ in $\B_\e$ if for every $s \in \D_\delta$ one has that:
\begin{itemize}
\item[(i)] $\Sigma(g_s) \cap \B_\e = \{0\}$;
\item[(ii)] $V(g_s) \setminus V(\varphi)$ intersects the sphere $\BS_\e$ transversally in $\K^n$, in the stratified sense;
\item[(iii)] $\HH_s \cap \Delta(\F) \cap U \subset \{0\}$.
\end{itemize}
\end{defi}

It follows from \cite{MJ} (Lemma 3.2 together with Proposition 4.4) that if $(g_s)$ has no coalescing of singular points, then for every $\e>0$ small enough there exists a good parameter-radius $\delta>0$ for $(g_s)$ in $\B_\e$.

Then the next proposition easily follows from the proof of (\cite{MJ}, Theorem 1.1).

\begin{prop} \label{prop_gpr}
Let $(g_s)$ be an isolated singularity family as above. If $\e>0$ is small enough and if $\delta>0$ is a good parameter-radius for $(g_s)$ in $\B_\e$, then for each $s \in \D_\delta$ there exists a homeomorphism 
$$h_s: V(g_s) \cap \B_\e \to V(g_0) \cap \B_\e$$ 
that fixes the origin.
\end{prop}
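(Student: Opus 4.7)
The plan is to adapt the vector field construction used in the proof of \cite{MJ}, Theorem 1.1 (which establishes the corresponding homeomorphism assertion from the no-coalescing hypothesis), and to observe that the three conditions defining a good parameter-radius are precisely the local ingredients consumed by that argument. Once those conditions are available for every fixed $s \in \D_\delta$, the construction of $h_s$ can be carried out essentially verbatim.

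First, using condition (iii), I would produce, on a neighborhood $U$ of $0$ in $\K^2$, a continuous one-parameter family of vector fields $\xi_s$ whose flow sends $\HH_0$ onto $\HH_s$ and which vanishes only at the origin. Since the lines $\HH_s$ depend linearly on $s$, such a vector field is explicit (a suitable rotation/shear in the coordinates $(y_0,y_1)$); because $\HH_s \cap \Delta(\F) \cap U \subset \{0\}$ for every $s \in \D_\delta$, a partition-of-unity modification on a tubular neighborhood of $\Delta(\F) \setminus \{0\}$ allows one to further arrange $\xi_s$ to be tangent to the regular strata of $\Delta(\F)$, without altering the property that its flow takes $\HH_0$ onto $\HH_s$.

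Next, I would lift $\xi_s$ to a stratified vector field $\tilde\xi_s$ on $X \cap \B_\e \setminus \{0\}$. Condition (i) guarantees that $\F$ restricted to $X \cap \B_\e \setminus (\F^{-1}(\Delta(\F)) \cup \{0\})$ is a stratified submersion, so the lift exists there; the $(w)$-regularity of $\mathcal S$ supplies the controlled-lifting machinery (Thom--Mather) needed to make $\tilde\xi_s$ continuous across the strata. Condition (ii) is then used to modify $\tilde\xi_s$ in a collar of $\BS_\e$ so that it becomes tangent to the sphere while still projecting down to $\xi_s$: the transversality of $V(g_s) \setminus V(\varphi)$ to $\BS_\e$ ensures that this tangential correction does not destroy the property that the flow preserves the family $\F^{-1}(\HH_s)$. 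The $(w)$-regularity of the stratum $\{0\}$ finally gives the conic-structure estimates needed for the flow to be defined on all of $[0,1]$ and to extend continuously by $0$ at the origin.

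Integrating the backward flow of $\tilde\xi_s$ starting at $V(g_s) \cap \B_\e = \F^{-1}(\HH_s) \cap \B_\e$ then yields the required homeomorphism $h_s \colon V(g_s) \cap \B_\e \to V(g_0) \cap \B_\e$ fixing the origin. The main technical obstacle is the boundary gluing: combining the interior lift of $\xi_s$ with the tangency-to-$\BS_\e$ requirement in such a way that the resulting field still covers $\xi_s$ (so that the flow genuinely sends $V(g_0)$ onto $V(g_s)$). This is precisely where condition (ii) is indispensable, and it is the delicate step of \cite{MJ}, Theorem 1.1; the remaining ingredients (integrability on $\B_\e \setminus \{0\}$ and continuous extension to $0$) are standard once this gluing has been performed.
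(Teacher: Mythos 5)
Your proposal takes essentially the same route as the paper: the paper's entire proof is the remark that the proposition ``easily follows from the proof of (\cite{MJ}, Theorem 1.1)'', the point being exactly your observation that conditions (i)--(iii) of a good parameter-radius are the precise inputs consumed by that vector-field lifting argument. Your sketch just spells out the details that the paper delegates to \cite{MJ}.
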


\section{Multi-parameters families of isolated singularities}  

Let $(X,0) \subset (\K^N,0)$ and $\mathcal S$ be as above. Let $f_{s_1, \dots, s_k}: (X,0) \to (\K,0)$ be an isolated singularity family of $\K$-analytic function-germs depending linearly on the parameters $s_1, \dots, s_k$, with $k>1$. This means that each $f_{s_1, \dots, s_k}$ has the form
$$f_{s_1, \dots, s_k}(z) = f_0(z) + s_1 \varphi_1(z) + \dots + s_k \varphi_k(z)  \, ,$$
where $f_0: (X,0) \to (\K,0)$ and $\varphi_i: (X,0) \to (\K,0)$ for $i=1, \dots, k$ are $\K$-analytic isolated singularity function-germs.

If $\K=\R$ set $\xi:=1$, and if $\K=\C$ set $\xi:=2$. Then let $\BS^{\xi k-1}$ denote the unitary sphere around $0$ in $\K^k$.

Now, for each point $r = (r_1, \dots, r_{k}) \in \BS^{\xi k-1}$ fixed, consider the one-parameter isolated singularity family $(g^r_{s})$ given by
$$g^r_{s}(z) := f_{s r_1, \dots, s r_k}(z) \, .$$
As before, we define the map-germ
$$
\begin{array}{cccc}
\F^r: & (X,0) & \to & (\K^2,0) \\
     & z & \mapsto & \big( f_0(z), r_1 \varphi_1(z) + \dots + r_k \varphi_k(z) \big) \\
\end{array}
\, ,$$
so that
$$(\F^r)^{-1}(\HH_{s}) = V(g^r_{s}) \, .$$

We say that the family $(f_{s_1, \dots, s_k})$ is $\Delta$-regular if for each $r \in \BS^{\xi k-1}$ the corresponding one-parameter family $(g^r_{s})$ is $\Delta$-regular. This means that for each $r \in \BS^{\xi k-1}$ there exist a neighborhood $U_r$ of $0$ in $\K^2$ and a real number $\delta_{r}>0$ such that 
$$\HH_s \cap \Delta(\F^r) \cap U_r \subset \{0\}$$
whenever $\|s\|<\delta_{r}$. 

We have:

\begin{lemma} \label{lemma_delta}
If the family $(f_{s_1, \dots, s_k})$ is $\Delta$-regular, then there exist a neighborhood $U$ of $0$ in $\K^2$ and a real number $\delta>0$ such that for every $r \in \BS^{\xi k-1}$ and for every $s \in \K$ with $\|s\|<\delta$ one has
$$\HH_s \cap \Delta(\F^r) \cap U \subset \{0\} \, .$$
\end{lemma}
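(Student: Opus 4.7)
The plan is a compactness argument on the parameter sphere $\BS^{\xi k-1}$. For each $r_0 \in \BS^{\xi k-1}$ I would first establish a local stability statement: there exist a neighborhood $W_{r_0}$ of $r_0$ in $\BS^{\xi k-1}$, a neighborhood $U_{r_0}'$ of $0$ in $\K^2$ and a real number $\delta_{r_0}'>0$ such that
$$\HH_s \cap \Delta(\F^r) \cap U_{r_0}' \subset \{0\} \quad \text{for every } r \in W_{r_0} \text{ and every } s \in \K \text{ with } \|s\|<\delta_{r_0}'.$$
Granting this local stability, a finite subcover of the compact sphere $\BS^{\xi k-1}$ by such $W_{r_0^{(j)}}$, together with $U := \bigcap_j U_{r_0^{(j)}}'$ and $\delta := \min_j \delta_{r_0^{(j)}}'$, delivers the required uniform data.

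Local stability at $r_0$ is the heart of the argument. I would prove it by contradiction: if it failed, one could extract sequences $r_n \to r_0$ in $\BS^{\xi k-1}$, $s_n \to 0$ in $\K$ and $y_n \in \HH_{s_n} \cap \Delta(\F^{r_n}) \setminus \{0\}$ with $y_n \to 0$. Since $\F^r(z) = \bigl(f_0(z),\, \sum_{i=1}^k r_i \varphi_i(z)\bigr)$ depends polynomially on $r$ and the stratification $\mathcal{S}$ is fixed, both the total discriminant $\tilde\Delta := \{(r, y) \in \BS^{\xi k-1}\times\K^2 : y \in \Delta(\F^r)\}$ and the incidence set $\tilde B := \{(r, s, y) : (r,y) \in \tilde\Delta,\; y \in \HH_s,\; y \neq 0\}$ are closed subanalytic subsets. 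Applying the curve selection lemma to $\tilde B$ at the limit point $(r_0, 0, 0) \in \overline{\tilde B}$ yields an analytic arc $(r(t), s(t), y(t)) \in \tilde B$ converging to $(r_0, 0, 0)$ as $t \to 0^+$; writing $y(t) = t^a v + o(t^a)$ with $v \neq 0$, the relation $y(t) \in \HH_{s(t)}$ together with $s(t) \to 0$ forces $v \in \HH_0$. A second application of curve selection lifts this arc to an analytic path $z(t) \in \Sigma(\F^{r(t)}) \cap X$ with $\F^{r(t)}(z(t)) = y(t)$ and $z(t) \to z_0 \in \Sigma(\F^{r_0})$, using that the total stratified critical locus is closed.

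The main obstacle is now to convert this data into a genuine secant of $\Delta(\F^{r_0})$ at $0$ along $\HH_0$, thereby contradicting the pointwise $\Delta$-regularity at $r_0$. Exploiting the linearity of $\F^r$ in $r$, the difference
$$\F^{r_0}(z(t)) - \F^{r(t)}(z(t)) \;=\; \Bigl(0,\; \textstyle\sum_{i=1}^k (r_{0,i}-r_i(t))\,\varphi_i(z(t))\Bigr)$$
is a pure perturbation in the second coordinate, of order controlled by $r(t) - r_0$, which, after verifying the expected order inequality between this perturbation and $y(t)$, preserves the leading direction $v \in \HH_0$. The remaining technical crux is to upgrade the arc $z(t) \in \Sigma(\F^{r(t)})$ to an arc genuinely inside $\Sigma(\F^{r_0})$ producing the same leading direction; this would be carried out via a Łojasiewicz-type comparison together with the analytic deformation of the stratified critical locus under the linear family, and then yields a sequence of points in $\Delta(\F^{r_0}) \setminus \{0\}$ accumulating at $0$ with secant direction on $\HH_0$, the desired contradiction.
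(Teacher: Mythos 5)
Your global strategy is exactly the paper's: prove a local stability statement at each $r_0\in\BS^{\xi k-1}$ (a neighborhood $W_{r_0}$ in the sphere on which the data $(U_{r_0},\delta_{r_0})$ works uniformly) and then pass to a finite subcover of the compact sphere. The paper's proof is precisely this skeleton, with the local step dispatched in one line ``by continuity'' of the condition $\HH_s\cap\Delta(\F^{w})\cap U_{r_0}\subset\{0\}$ in the parameter $w$. Where you differ is that you attempt to actually prove the local step, and that attempt has a genuine, and explicitly unresolved, gap.

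The gap is the final reduction. Your contradiction argument correctly produces an analytic arc $(r(t),s(t),y(t))$ with $y(t)\in\Delta(\F^{r(t)})\setminus\{0\}$, $y(t)\to 0$ and leading direction $v\in\HH_0$, lifted to $z(t)\in\Sigma(\F^{r(t)})$. But the hypothesis you must contradict is $\Delta$-regularity of the \emph{single} map $\F^{r_0}$, which concerns secants of $\Delta(\F^{r_0})$ only. Since the stratified critical locus moves with $r$, the points $z(t)$ are in general not in $\Sigma(\F^{r_0})$, so $\F^{r_0}(z(t))$ is just a point of $\K^2$, not a point of $\Delta(\F^{r_0})$; computing the difference $\F^{r_0}(z(t))-\F^{r(t)}(z(t))$ therefore does not produce the required secants, and you acknowledge this (``the remaining technical crux is to upgrade the arc\dots'') without carrying it out. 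Even the order estimate you invoke is unreliable: the perturbation $\sum_i(r_{0,i}-r_i(t))\varphi_i(z(t))$ need not be small compared with $y_1(t)=\sum_i r_i(t)\varphi_i(z(t))$, because the individual $\varphi_i(z(t))$ can be much larger than their $r(t)$-weighted sum. A workable route is not to force everything into $\Delta(\F^{r_0})$: treat the total discriminant $\{(r,y): y\in\Delta(\F^{r})\}$ as a (sub)analytic family over the sphere and show directly that the non-tangency condition $|y_0|\geq\delta|y_1|$ on $\Delta(\F^{r})\cap U\setminus\{0\}$ is open in $r$ --- which is the semicontinuity the paper is implicitly invoking with the words ``by continuity.'' As written, your proof of the local stability step is incomplete at its decisive point.
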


\begin{proof}
By continuity, for each $r \in \BS^{\xi k-1}$ fixed, there exists some small open neighborhood $N_r$ of $r$ in $\BS^{\xi k-1}$ such that, for every $w \in N_r$, one has that $\HH_s \cap \Delta(\F^{w}) \cap U_r \subset \{0\}$ whenever $\|s\|<\delta_{r}$. Then the result follows from the fact that $\BS^{\xi k-1}$ is compact.
\end{proof}

Now suppose that the family $(f_{s_1, \dots, s_k})$ has no coalescing of singular points. Then it is clear that  there exists $\lambda>0$ sufficiently small such that for every $r \in \BS^{\xi k-1}$ and for every $\|s\| \leq \lambda$ one has that $\Sigma(g_{s}^r) \cap \B_{\e} = \{0\}$.

Moreover, it is not difficult to see that $\lambda>0$ can be chosen small enough so that it is also true that $V(g^r_s) \setminus V(r_1 \varphi_1 + \dots + r_k \varphi_k)$ intersects the sphere $\BS_\e$ transversally in $\K^n$, in the stratified sense, for every $s \in \D_\lambda$ and for every $r = (r_1, \dots, r_{k}) \in \BS^{\xi k-1}$.

So if we set $\delta' := \min\{\delta, \lambda\}$, it follows that for every $\e>0$ small enough, $\delta'$ is a good parameter-radius for $(g_s^r)$ in $\B_\e$, for every $r \in \BS^{\xi k-1}$. Then it follows from Proposition \ref{prop_gpr} that for each $s = (s_1, \dots, s_k) \in \K^k$ with $\|s\| < \delta'$ there exists a homeomorphism 
$$h_s: V(f_{s_1, \dots, s_k}) \cap \B_\e \to V(f_0) \cap \B_\e$$ 
that fixes the origin.

This proves Theorem \ref{theo_ctt}, and Theorem \ref{cor_ctt} follows easily. 

\begin{example} 
Recall the one-parameter isolated singularity family 
$$\mathtt f_s(x,y,z)=x^3+y^4+z^2+sx^4 + s^2y^5$$ 
on $X=\{x^2-y^2=0\} \subset \K^3$, as in Example \ref{example_1}. Consider the two-parameters family 
$$f_{s_1,s_2}(x,y,z)=x^3+y^4+z^2+s_1x^4 + s_2y^5$$ 
on $X$. Using Theorem \ref{theo_is}, one can easily see that the family $(f_{s_1,s_2})$ has no coalescing of singular points. So it follows from Theorem \ref{cor_ctt} that the family $(\mathtt f_s)$ has constant topological type.
\end{example}

Finally, Corollary \ref{cor_mcc} follows from Theorem \ref{cor_ctt} together with the following:

\begin{lemma}
In the case when $(X,0) = (\C^n,0)$, an isolated singularity family $(f_{s_1, \dots, s_k})$ has no coalescing of singular (critical) points if and only if it is $\mu$-constant.
\end{lemma}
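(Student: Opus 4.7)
The plan is to reduce the claim to the classical conservation of Milnor number for deformations of isolated hypersurface singularities on $\C^n$. Recall that if $f_0: (\C^n,0) \to (\C,0)$ is holomorphic with an isolated critical point at the origin of Milnor number $\mu(f_0)$, and $f_\tau$ is any continuous deformation parametrized by $\tau$ in an open neighborhood of $0$ in some $\C^k$, then for every sufficiently small $\e>0$ there exists $\delta>0$ such that, for every $\tau$ with $\|\tau\|<\delta$, the map $f_\tau$ has only isolated critical points in $\B_\e$ and
$$\mu(f_0) \;=\; \sum_{p \in \Sigma(f_\tau) \cap \B_\e} \mu(f_\tau, p).$$
This standard fact can be proved by identifying $\mu(f_\tau, p)$ with the local degree at $p$ of the gradient map $\nabla f_\tau$ and invoking the homotopy invariance of the topological degree (equivalently, it expresses conservation of the intersection multiplicity associated with the Jacobian ideal). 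Crucially, the formula is valid for deformations depending on any number of parameters, since only the continuity of $\tau \mapsto f_\tau$ is used in its proof.

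Given the conservation law, both implications of the lemma follow immediately. If $(f_{s_1,\dots,s_k})$ is $\mu$-constant, then $\mu(f_s,0) = \mu(f_0,0) = \mu(f_0)$ for every $s$ with $\|s\|<\delta$, and subtracting this from the conservation formula yields
$$\sum_{\substack{p \,\in\, \Sigma(f_s) \cap \B_\e \\ p \,\neq\, 0}} \mu(f_s, p) \;=\; 0.$$
Since the Milnor number at every isolated critical point of a holomorphic function is a strictly positive integer, this forces $\Sigma(f_s) \cap \B_\e = \{0\}$ for every $s \in \D_\delta$, which is precisely the no-coalescing condition. Conversely, if $\Sigma(f_s) \cap \B_\e = \{0\}$ for every $s \in \D_\delta$, the right-hand side of the conservation formula collapses to the single term $\mu(f_s,0)$, giving $\mu(f_s,0) = \mu(f_0)$ and thus $\mu$-constancy.

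I do not anticipate any genuine obstacle: the argument rests on a classical property that transfers to multi-parameter families for free. The only point worth flagging is that we are in the ambient case $(X,0) = (\C^n,0)$, where the stratification with $\{0\}$ as a stratum has the unique positive-dimensional stratum $\C^n \setminus \{0\}$. Consequently, $\Sigma(f_s)$ in the stratified sense consists of $\{0\}$ together with the classical critical locus of $f_s$ in $\C^n \setminus \{0\}$, so the no-coalescing condition translates faithfully into a statement about classical critical points and the Milnor-number bookkeeping above is legitimate.
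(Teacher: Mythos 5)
Your proof is correct, but it takes a genuinely different route from the paper. The paper's argument is a reduction to the one-parameter case: it invokes the known equivalence for $k=1$ (citing Greuel) applied to each radial family $g^r_s = f_{sr_1,\dots,sr_k}$ with $r \in \BS^{2k-1}$, and then uses compactness of the sphere to make the radii $\lambda_r$ uniform; this fits the overall strategy of the paper, which consistently slices multi-parameter families into one-parameter radial ones. You instead prove the multi-parameter statement directly, by establishing conservation of the total Milnor number in $\B_\e$ via the identification of $\mu(f_\tau,p)$ with the local degree of $\nabla f_\tau$ and homotopy invariance of the degree on $\BS_\e$; both implications then drop out from positivity of the local Milnor numbers. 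Your route is more self-contained and avoids the slightly delicate uniformization over $r \in \BS^{2k-1}$ (where one must check that the radii $\e_r,\delta_r$ coming from the $k=1$ result can be chosen uniformly), at the cost of re-proving the conservation law rather than quoting it. The only point you pass over quickly is that for small $\|\tau\|$ the critical set of $f_\tau$ in $\B_\e$ is finite, so that the sum in the conservation formula makes sense; this follows from finiteness of the projection of the relative critical locus $\{(x,s): \nabla_x f_s(x)=0\}$ onto the parameter space (its fiber over $s=0$ in $\B_\e$ is $\{0\}$), and is standard for holomorphic families, so it is a remark to add rather than a gap. Your observation that the stratified singular set reduces to $\{0\}$ together with the classical critical locus on $\C^n\setminus\{0\}$ correctly justifies translating the no-coalescing condition into the degree-theoretic bookkeeping.
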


\begin{proof}
First notice that in the case when $k=1$ the result is well-known (see \cite{Gr} for instance). So, in our case, where $k>1$, it follows that $(f_{s_1, \dots, s_k})$ has no coalescing of critical points if and only for each $r \in \BS^{2k-1}$ there exists a real number $\lambda_r>0$ such that $\mu(g_s^r)$ is constant for every $s \in \C$ with $\|s\|<\lambda$. But since $\BS^{2k-1}$ is compact, that happens if and only if there is a real number $\lambda>0$ such that $\mu(f_{s_1, \dots, s_k})$ is constant for every $s \in \C^k$ with $\|s\|<\lambda$.
\end{proof}

\begin{example}
Consider the two-parameters isolated singularity family $(f_{s_1,s_2})$ on $\C^2$ given by
$$f_{s_1,s_2}(x,y) := x^3 + y^2 + x^2y + s_1(x^2-x^2y) + s_2 x^2 \, .$$
Note that it is not $\mu$-constant. In fact, $\mu(f_0)=2$, while $\mu(f_{0,s_2})=1$ for $s_2 \neq 0$. However, the one-parameter family $(g_s)$ given by
$$g_s(x,y) := f_{s_1,-s_1}(x,y) = x^3 + y^2 + x^2y + s_1(-x^2y) $$
is clearly $\mu$-constant. 
\end{example}

Clearly, if the multi-parameters family $(f_{s_1, \dots, s_k})$ associated to a one-parameter family $(\mathtt f_s)$ on $\C^n$ is $\mu$-constant, then $(\mathtt f_s)$ is also $\mu$-constant. Thus our criterium for constancy of the topological type of $(\mathtt f_s)$ is weaker then the criterium suggested by Le-Ramanujam's Conjecture when $n=3$. So the natural question is:

\begin{question} 
Is there an example of a $\mu$-constant isolated singularity family $(\mathtt f_s)$ on $\C^3$ such that the corresponding multi-parameters family $(f_{s_1, \dots, s_k})$ is not $\mu$-constant? 
\end{question}

A negative answer to that question would imply that the $\mu$-constant conjecture is true.

\section{Deformations of isolated singularity function-germs on IDS}  

Now let $(X,0) \subset (\C^N,0)$ be the germ of a complex analytic isolated determinantal singularity (IDS), and let $\tilde f_s: (\C^N,0) \to (\C,0)$ be a family of complex function-germs depending holomorphically on the parameter $s = (s_1, \dots, s_k) \in \C^k$. This means that the map $\tilde F: (\C^N \times \C^k, (0,0)) \to (\C,0)$ defined by $F(x,s):= \tilde f_s(x)$ is holomorphic. The restriction of $\tilde F$ to $(X,0)$ defines a family of complex function-germs 
$$f_s: (X,0) \to (\C,0) \, .$$

If $(f_s)$ is an isolated singularity family, then for each $s \in (\C^k,0)$ there is a well-defined Milnor number $\mu(f_s)$ in the sense of \cite{BOT}. It is the number of points in the critical locus of a morsification $f_s': X' \to \C$ of $f_s$, where $X'$ is a smoothing of $X$ near the origin. 

In the particular case when $X$ is an ICIS, it is well-know that $\mu(f_s) \leq \mu(f_0)$ whenever $\|s\|$ is small enough. Moreover, the family $(f_s)$ has constant Milnor number if and only if it has no coalescing of singular points (\cite{COT} and \cite{Gr}). This means that there exist positive real numbers $\e>0$ and $\delta>0$ such that $0$ is the only critical point of $f_s$ in $\B_\e$, for every $s \in \BD_\delta$. These two facts come from the fact that when $X$ is an ICIS, the Milnor number $\mu(f_0,0)$ of $f_0$ at $0$ has an algebraic expression as the dimension of a quotient ring that is Cohen-Macaulay. Then the principle of conservation of number gives that 
\begin{equation}
\mu(f_0,0) = \sum_{x \in \Sigma(f_s)} \mu(f_s,x) \, ,
\end{equation}
where $\mu(f_s,x)$ denotes the Milnor number of $f_s$ at the singular point $x \in \Sigma(f_s)$.

When $X$ is an arbitrary IDS there is no such algebraic expression for the Milnor number of $f_s: (X,0) \to (\C,0)$, so one cannot apply the same argument. However, a simple topological argument gives the same equality. Precisely, we have:

\begin{lemma} \label{lemma}
Let $f_s: (X,0) \to (\C,0)$ be an isolated singularity family defined on an IDS. Then there exists $\delta>0$ such that for every $s \in \D_\delta$ equality $(1)$ above holds.
\end{lemma}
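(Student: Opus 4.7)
The plan is to mimic the classical ICIS proof of (1), replacing the Cohen--Macaulay argument by a conservation-of-number argument for Morse critical points: I use the definition of $\mu(f,x)$ on an IDS from \cite{BOT} as the number of Morse critical points of a morsification of $f$ on a smoothing $X_t$ of $X$. First I would fix $\e>0$ small enough that $\Sigma(f_0) \cap \overline\B_\e = \{0\}$ and that $X$ admits a smoothing $\mathcal X \to (\C,0)$ over $\overline\B_\e$, with smooth fibers $X_t$ for $t \neq 0$ small. By Lemma \ref{prop} together with the isolated-singularity hypothesis on the family, after shrinking $\delta>0$ the set $\Sigma(f_s) \cap \overline\B_\e$ is a finite collection $\{x_1 = 0, x_2, \dots, x_m\} \subset \B_{\e/2}$ for every $s \in \D_\delta$. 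I then pick pairwise disjoint small open balls $B_{x_i} \subset \B_{\e/2}$ around each $x_i$.

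Fixing a generic linear form $\ell$ on $\C^N$ and a small $t\neq 0$, I would consider the two-parameter family $G_{s,\epsilon} := (\tilde f_s + \epsilon\ell)|_{X_t}$ on $X_t \cap \overline\B_\e$, and compare its Morse critical points in two regimes. When $s=0$ and $\epsilon_0>0$ is small, $G_{0,\epsilon_0}$ is by construction a morsification of $f_0$, and so has exactly $\mu(f_0,0)$ Morse critical points in $X_t \cap \overline\B_\e$, all lying inside $X_t \cap \B_{\e/2}$. When $s \in \D_\delta$ is fixed and $\epsilon$ is taken small enough relative to $s$, $G_{s,\epsilon}$ is a morsification of $f_s$ at each $x_i$, so the count of its Morse critical points in $X_t \cap B_{x_i}$ equals $\mu(f_s,x_i)$; moreover, the $(w)$-regularity of $\mathcal S$ and the stratified submersion of $f_s$ on $\overline\B_\e \setminus \{x_1,\dots,x_m\}$ force all remaining critical points of $G_{s,\epsilon}$ to lie in $\bigcup_i (X_t \cap B_{x_i})$ once $\epsilon$ is small enough.

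The final and most delicate step is to match the two counts: I would exhibit a continuous path in the $(s,\epsilon)$-plane connecting $(0,\epsilon_0)$ to $(s,\epsilon)$ such that throughout the path $G_{s,\epsilon}$ has no critical points on $X_t \cap \BS_\e$. The $(w)$-regularity of $\mathcal S$ and the stratified submersion of $f_0$ on the compact annulus $X \cap (\overline\B_\e \setminus \B_{\e/2})$ give uniform transversality there, which is stable under small $C^1$-perturbation both of the smoothing $X \rightsquigarrow X_t$ and of the function $\tilde f_0 \rightsquigarrow \tilde f_s + \epsilon\ell$; this confines all Morse critical points of $G_{s,\epsilon}$ to $X_t \cap \B_{\e/2}$ for small $(s,\epsilon,t)$. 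Then the conservation-of-number principle, applied to the intersection index of $dG_{s,\epsilon}$ with the zero section of $T^*X_t$ on the manifold-with-boundary $X_t \cap \overline\B_\e$, forces the total count to be constant along the path, and comparing the two regimes yields equality $(1)$.
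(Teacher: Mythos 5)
Your proof is correct and follows essentially the same route as the paper's: a morsification of the family on a smoothing of $X$, together with stability/conservation of the number of Morse critical points and control at the boundary sphere $\BS_\e$, yields equality $(1)$. You merely spell out in more detail what the paper compresses into ``let $f_s'$ be a morsification in family'' and ``Morse points are stable under small perturbations'' --- namely the explicit perturbation $\tilde f_s+\epsilon\ell$ on $X_t$, the two counting regimes, and the path in parameter space along which no critical points cross $\BS_\e$.
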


\begin{proof}
Let $f_s': X' \to \C$ be a morsification (in family) of $f_s$. Since $X$ is an IDS and since $f_0$ has an isolated singularity, we can choose $\e>0$ be small enough such that $X \cap \B_\e \setminus \{0\}$ is smooth and the restriction of $f_0$ to $X \cap \B_\e \setminus \{0\}$ is a submersion. We can also assume that $f_0'$ has exactly $\mu(f_0)$-many (Morse) critical points in $X' \cap \B_\e$. Then there exists $\delta>0$ sufficiently small such that for every $s \in \D_\delta$ one has that $f_s'$ has exactly $\mu(f_0)$-many critical points in $X' \cap \B_\e$. This follows from the fact that Morse points are stable under small perturbations. Thus equality $(1)$ above follows.
\end{proof}

As an immediate consequence, we have:

\begin{prop} \label{prop_pcn}
Let $f_s: (X,0) \to (\C,0)$ be an isolated singularity family defined on an IDS. Then there exists $\delta>0$ such that:
\begin{itemize}
\item[$(i)$] for every $s \in \D_\delta$ one has $\mu(f_s) \leq \mu(f_0)$.
\item[$(ii)$] $\mu(f_s) = \mu(f_0)$ for every $s \in \D_\delta$ if and only if $(f_s)$ has no coalescing of singular points. 
\end{itemize}
\end{prop}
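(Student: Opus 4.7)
The plan is to derive both items of Proposition \ref{prop_pcn} directly from the conservation-of-number equality furnished by Lemma \ref{lemma}. Choose $\delta>0$ as in that lemma; then for every $s \in \D_\delta$ one has
$$\mu(f_0) = \sum_{x \in \Sigma(f_s)} \mu(f_s, x).$$
Recall that $\mu(f_s)$ denotes, by definition, the Milnor number of $f_s$ at the origin, and that each $\mu(f_s, x)$ is a non-negative integer.

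For part $(i)$, since $\{0\}$ is a stratum of $\mathcal S$, the origin is always a stratified critical point of $f_s$, so $0 \in \Sigma(f_s)$ and the term $\mu(f_s) = \mu(f_s, 0)$ appears on the right-hand side of the equality above. All remaining terms being non-negative, we conclude $\mu(f_s) \leq \mu(f_0)$.

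For part $(ii)$, the implication ``no coalescing implies $\mu$-constant'' is immediate: if $\Sigma(f_s) \cap \B_\e = \{0\}$, the sum reduces to the single term $\mu(f_s, 0) = \mu(f_s)$, giving $\mu(f_s) = \mu(f_0)$. Conversely, suppose $\mu(f_s) = \mu(f_0)$ for every $s \in \D_\delta$. Then the equality forces
$$\sum_{x \in \Sigma(f_s) \setminus \{0\}} \mu(f_s, x) = 0,$$
and since each summand is a non-negative integer, $\mu(f_s, x) = 0$ for every $x \in \Sigma(f_s) \setminus \{0\}$. To rule out the existence of such a point $x$, I would invoke the strict positivity of the Milnor number at any genuine stratified singular point: by the morsification definition recalled in the section, $\mu(f_s, x) = 0$ would mean that no Morse critical points appear near $x$ in a morsification of $f_s$, so $f_s$ would already be a stratified submersion in a neighborhood of $x$, contradicting $x \in \Sigma(f_s)$. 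Hence $\Sigma(f_s) \cap \B_\e = \{0\}$ for every $s \in \D_\delta$, which is exactly the no-coalescing condition.

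The only delicate point in this plan is the positivity statement $\mu(f_s, x) \geq 1$ for every $x \in \Sigma(f_s)$, used in the converse direction of part $(ii)$. In the ICIS case it is classical; in the general IDS case it follows from the morsification definition of $\mu$ in \cite{BOT} as sketched above, combined with the stability of Morse points under small perturbations already exploited in the proof of Lemma \ref{lemma}.
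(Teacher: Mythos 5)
Your proof is correct and follows exactly the route the paper intends: the paper states Proposition \ref{prop_pcn} as an ``immediate consequence'' of the conservation-of-number equality in Lemma \ref{lemma}, and your argument simply makes that deduction explicit. The one point you flag as delicate, the positivity $\mu(f_s,x)\geq 1$ for $x\in\Sigma(f_s)\setminus\{0\}$, is in fact standard here because $X$ is smooth away from the origin (it is an IDS), so $\mu(f_s,x)$ is the classical Milnor number of an isolated critical point of a holomorphic function on a smooth space, which is strictly positive.
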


On the one hand, Proposition \ref{prop_pcn} gives a characterization for the $\mu$-constancy of deformations of functions on an IDS, in the spirit of \cite{Gr} and \cite{COT}. 

On the other hand, if $(X,0) \subset (\C^N,0)$ is an IDS and if $f_s: (X,0) \to (\C,0)$ is an isolated singularity family depending linearly on the parameters $s_1, \dots, s_k \in \C$, Proposition \ref{prop_pcn} together with Theorem \ref{cor_ctt} give Theorem \ref{theo_3}.

Notice that Proposition \ref{prop_pcn} and Proposition \ref{prop} combined give the following Corollary, which is a good criterium to decide if the family $(f_s)$ is $\mu$-constant:

\begin{cor} \label{cor_2}
Let $\tilde f_s: (\C^N,0) \to (\C,0)$ be an isolated singularity family depending holomorphically on $s \in \C^k$. Let $(X,0) \subset (\C^N,0)$ be an IDS and suppose that the family $f_s: (X,0) \to (\C,0)$ given by restriction to $(X,0)$ is also an isolated singularity family. If $(\tilde f_s)$ is $\mu$-constant then $(f_s)$ is $\mu$-constant.
\end{cor}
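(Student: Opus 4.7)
The plan is to chain together three implications that are essentially already available in the paper or in the classical literature. The logical route is
\[
(\tilde f_s) \text{ is }\mu\text{-constant} \;\Longrightarrow\; (\tilde f_s) \text{ has no coalescing of critical points} \;\Longrightarrow\; (f_s) \text{ has no coalescing of singular points} \;\Longrightarrow\; (f_s) \text{ is }\mu\text{-constant}.
\]

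First, I would invoke the ICIS case of the Lê--Ramanujam / Greuel characterisation for the ambient family. Since $\tilde f_s : (\C^N,0) \to (\C,0)$ is just a hypersurface family (i.e.\ the ICIS case with $X=\C^N$), the hypothesis that $(\tilde f_s)$ is $\mu$-constant is equivalent, by the classical result cited in the paper just before Proposition \ref{prop_pcn} (see \cite{Gr}, \cite{COT}), to the existence of real numbers $\e>0$ and $\delta>0$ such that $\Sigma(\tilde f_s) \cap \B_\e = \{0\}$ for every $s \in \BD_\delta$. That is, $(\tilde f_s)$ has no coalescing of critical points.

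Next, I would apply Proposition \ref{theo_is}$(ii)$ to pass from the ambient family to its restriction. By hypothesis $f_s : (X,0) \to (\C,0)$ is an isolated singularity family, so in particular $f_0$ has an isolated singularity. Combined with the fact that $(\tilde f_s)$ has no coalescing of critical points, Proposition \ref{theo_is}$(ii)$ (whose proof rests on Lemma \ref{prop}) yields that $(f_s)$ has no coalescing of singular points on $(X,0)$.

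Finally, I would conclude by Proposition \ref{prop_pcn}$(ii)$: since $(X,0)$ is an IDS and $(f_s)$ is an isolated singularity family with no coalescing of singular points, the Milnor number $\mu(f_s)$ (in the sense of \cite{BOT}) is constant on a small disk around $0 \in \C^k$, i.e.\ $(f_s)$ is $\mu$-constant. No step presents a real obstacle: the only delicate point is making sure the neighbourhood $\B_\e$ chosen for the ambient no-coalescing property is the same (up to shrinking) as the one used in Lemma \ref{prop} and in Proposition \ref{prop_pcn}, which is straightforward since all three statements are of the form ``there exist $\e,\delta>0$ such that\ldots'' and we may intersect the disks.
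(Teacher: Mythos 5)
Your proof is correct and follows essentially the same route the paper intends: the paper merely remarks that Lemma \ref{prop} (equivalently Proposition \ref{theo_is}) and Proposition \ref{prop_pcn} combine to give the corollary, and your three-step chain is precisely that combination with the details written out. The only cosmetic difference is that for the first implication you invoke the classical Greuel/ICIS characterisation for the ambient family, whereas one could equally apply Proposition \ref{prop_pcn}$(ii)$ with $X=\C^N$; these amount to the same fact.
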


\begin{example}
Let $X \subset \C^4$ be the $2$-dimensional IDS defined by the matrix
$$
\begin{pmatrix}
w & y & x \\
z & w & y \\
\end{pmatrix}
.$$
Consider the $\mu$-constant family $\tilde f_s: (\C^4,0) \to (\C,0)$ given by
$$\tilde f_s(x,y,z,w) := x^2+y^2+z^2+w^2+sx^3 \, ,$$
and let $f_s: (X,0) \to (\C,0)$ be its restriction to $X$. For each $s \in \C$ fixed, the singular points of $f_s$ are those points in $X$ at which all the $3 \times 3$-minors of the following matrix vanish:
$$
\begin{pmatrix}
\a(x) & 2y & 2z & 2w \\
0 & z & y & -2w \\
z & -w & x & -y \\
w & -2y & 0 & x \\
\end{pmatrix}
,$$
where $\a(x) := 2x+3sx^2$. Then some calculation shows that the singular locus of $f_s$ is the finite set 
$$\Sigma(f_s) = \{\a(x) = y=z=w=0\} = \left\{ \left( 0,0,0,0 \right), \left( -\frac{2}{3s},0,0,0 \right) \right\} \, .$$
Hence the family $(f_s)$ has no coalescing of singular points. Thus if follows from Proposition \ref{prop_pcn} that $f_s$ is a $\mu$-constant deformation of the function-germ $f_0: (X,0) \to (\C,0)$.

On the other hand, consider the family $g_s: (X,0) \to (\C,0)$ given by restriction to $X$ of the family $\tilde g_s: (\C^4,0) \to (\C,0)$ defined by
$$\tilde g_s(x,y,z,w) := x^3+y^2+z^2+w^2+sx^2 \, ,$$
which has non-constant Milnor number. Setting $\b(x):= 3x^2+2sx$ one has that 
$$\Sigma(g_s) = \{\b(x) = y=z=w=0\} = \left\{ \left( 0,0,0,0 \right), \left( -\frac{2s}{3},0,0,0 \right) \right\} \, .$$
Then the family $(g_s)$ has coalescing of singular points and thus it is not a $\mu$-constant deformation.
\end{example}



\end{document}